\newtheorem {theorem}    {Theorem}[section]
\newtheorem {lemma}      [theorem]    {Lemma}
\newtheorem {corollary}  [theorem]    {Corollary}
\newtheorem {proposition}[theorem]    {Proposition}
\theoremstyle{definition}
\newtheorem{definition}[theorem]{Definition}
\newtheorem{remark}[theorem]{Remark}
\numberwithin{equation}{section}
\newenvironment{red}{\relax\color{red}}{\relax}
\newcommand{\ber}{\begin{red}}
\newcommand{\er}{\end{red}}
\begin{document}

\title[Convergence of Kac--Moody Eisenstein Series]{Convergence of Kac--Moody Eisenstein Series \\ over a function field}

\date{\today}

\author[K.-H. Lee]{Kyu-Hwan Lee$^{\star}$}
\thanks{$^{\star}$This work was partially supported by a grant from the Simons Foundation (\#712100).}
\address{Department of
Mathematics, University of Connecticut, Storrs, CT 06269, U.S.A.}
\email{khlee@math.uconn.edu}

\author[D. Liu]{Dongwen Liu$^{\dagger}$}
\thanks{$^{\dagger}$This work was partially supported by the Natural Science Foundation of Zhejiang Province (\#LZ22A010006) and the National Natural Science Foundation of 
China (\#12171421).} 
\address{School of Mathematical Sciences, Zhejiang University, Hangzhou 310027, P.R. China}
\email{maliu@zju.edu.cn}

\author[T. Oliver]{Thomas Oliver}
\address{Teesside University, Middlesbrough, U.K.}
\email{T.Oliver@tees.ac.uk}

\subjclass[2010]{Primary 20G44; Secondary 11F70}

\begin{abstract}  
We establish everywhere convergence in a natural domain for Eisenstein series on a symmetrizable Kac--Moody group over a function field. Our method is different from that of the affine case which does not directly generalize. In comparison with the analogous result over the real numbers, everywhere convergence is achieved without any additional condition on the root system.
\end{abstract}

\maketitle

\section{Introduction}\label{sec:intro}

Eisenstein series are amongst the most fundamental examples of automorphic forms. 
They are of significant consequence in number theory, and have applications in other areas of mathematics and mathematical physics. 
The theory of Eisenstein series on reductive groups was developed by Langlands in \cite{La1, La2} and led ultimately to the Principle of Functoriality. 
Furthermore, as surveyed in \cite[Section~8]{GM}, Eisenstein series form the basis of an established technique for proving cases of functoriality, namely, the Langlands--Shahidi method. 

Motivated by potential arithmetic applications such as generalizing the Langlands--Shahidi method, we study Eisenstein series over Kac--Moody groups. In a series of pioneering works, affine Kac--Moody Eisenstein series over $\mathbb{R}$ were extensively studied by Garland \cite{G78,G80,G99,G04,G06,GMS1,GMS2,GMS3,GMS4,G11}. 
Some of the results in the affine case were generalized to number fields by Liu \cite{Liu}, rank $2$ hyperbolic Eisenstein series over $\mathbb R$ by Carbone--Lee--Liu \cite{CLL}, 
and general symmetrizable Kac--Moody Eisenstein series over $\mathbb{R}$ by Carbone--Garland--Lee--Liu--Miller \cite{CGLLM}. 
Entirety of certain cuspidal Kac--Moody Eisenstein series over $\mathbb R$ is established for the affine case in \cite{GMP}, for the rank 2 hyperbolic case in \cite{CLL}, and for more general cases in \cite{CLL1}.
Applications of Kac--Moody Eisenstein series to string theory appeared in \cite{FK,FKP}.

Classical Eisenstein series over function fields were studied by Harder \cite{Ha}. 
Important results for affine Kac--Moody Eisenstein series over function fields were announced by Braverman--Kazhdan \cite{BK}, utilising on a geometric formalism introduced by Kapranov \cite{Kap} and Patnaik \cite{P}. 
A more algebraic framework was established by Lee--Lombardo in the affine case \cite{LL}. In this paper we are interested in general Kac--Moody Eisenstein series over function fields.
 
As for general Eisenstein series over symmetrizable Kac--Moody groups, it has been a challenging problem to establish everywhere convergence of the series on the natural domain, while almost everywhere convergence of the series follows from the convergence of their constant terms. 
Over $\mathbb R$, the main result of \cite{CGLLM} establishes everywhere convergence under some restrictions. 
Over function fields, everywhere convergence of the affine Kac--Moody Eisenstein series follows from that of its constant term through an explicit description of the unipotent subgroup as was shown in \cite{LL}. 
However, such an explicit description is not available for general Kac--Moody groups, and the method in \cite{LL} does not directly generalize. 

In this paper, we circumvent the obstacle and establish everywhere convergence of the Kac--Moody Eisenstein series over a function field on a natural domain, i.e., for the Godement range of the spectral parameter and for the Tits cone range of the group element. 
Our approach involves constructing a subset of positive measure inside the unipotent group through various representation-theoretic properties.
It requires convergence of the constant terms as a prerequisite which we obtain through the similar argument in \cite{CGLLM}. 

In order to state our main result more precisely, we need to introduce some notation.
Let $G$ be a symmetrizable Kac--Moody group over a function field $F$ with the Iwasawa decomposition $G_{\mathbb A} = U_{\mathbb A} H_{\mathbb A} \mathbb K$, where $\mathbb A$ is the ad\`ele ring of $F$, $U$ is a pro-unipotent subgroup, $H$ is a torus, and $\mathbb K$ is an analogue of a maximal compact subgroup.  
Let $\mathfrak g$ be the Kac--Moody algebra of $G$, and $\mathfrak h$ be its Cartan subalgebra. 
Denote by $\rho \in \mathfrak h^*_{\mathbb C}$ the Weyl vector and by $\mathcal C^* \subset \mathfrak h^*_{\mathbb R}$ the set of strictly dominant weights. 
Finally, let $H_\mathfrak{C} \subset H_{\mathbb A}$ be the subgroup corresponding the Tits cone $\mathfrak C \subset \mathfrak h_{\mathbb R}$.
The main result of this paper can be stated as follows:

\begin{theorem}\label{conv.thm.intro}
If $\lambda\in\mathfrak{h}^{\ast}_{\mathbb{C}}$ satisfies $Re(\lambda - \rho) \in \mathcal{C}^*$, then the Kac--Moody Eisenstein series $E_\lambda(g)$ converges absolutely for $g\in U_{\mathbb{A}}H_\mathfrak{C}\mathbb{K}$.
\end{theorem}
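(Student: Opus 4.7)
The plan is to combine two ingredients: convergence of the constant term in the Godement range, obtained by an adaptation of the method in \cite{CGLLM}, and a representation-theoretic upgrade from almost-everywhere to everywhere convergence. First, I would establish absolute convergence of the constant term $\int_{U_F\backslash U_{\mathbb A}}E_\lambda(ug)\,du$ for $g\in H_{\mathfrak C}\mathbb K$. Computing this constant term via the Gindikin--Karpelevich formula expresses it as a sum over the Weyl group of Euler products of local rank-one factors indexed by positive roots. In the range $Re(\lambda-\rho)\in\mathcal{C}^*$, each Euler product converges absolutely, and the full sum over the (generally infinite) Weyl group can be controlled by the same combinatorial estimates as in \cite{CGLLM}; only the local factors need to be replaced by their function-field analogues.

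Second, applying Tonelli's theorem to the non-negative integrand shows that, writing $\phi_\lambda$ for the standard Iwasawa-character function underlying $E_\lambda$, the positive sum $\sum_{\gamma\in B_F\backslash G_F}|\phi_\lambda(\gamma ug)|$ is finite for almost every $u$ in a fundamental domain $\Omega\subset U_{\mathbb A}$ for $U_F\backslash U_{\mathbb A}$. To promote this to pointwise convergence at every $g\in U_{\mathbb A}H_{\mathfrak C}\mathbb K$, write $g=u_0 hk$ and, using the left $G_F$-invariance of $E_\lambda$, reduce to $u_0\in\Omega$. The objective is then to construct, depending on $g$, a compact open subset $S\subset U_{\mathbb A}$ of positive measure together with a uniform termwise majorization
\[
|\phi_\lambda(\gamma g)|\leq C\,|\phi_\lambda(\gamma v g)|\qquad\text{for all }\gamma\in B_F\backslash G_F\text{ and all }v\in S,
\]
so that any $v$ in the nonempty intersection of $S$ with the full-measure convergence set from the previous step furnishes a summable majorant for $E_\lambda(g)$.

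The construction of $S$ and this uniform majorization is the representation-theoretic heart of the argument. For each dominant integral weight $\Lambda$, the integrable highest-weight module $V_\Lambda$ has a highest-weight vector $v_\Lambda$ whose norm recovers the Iwasawa torus exponential $e^{\Lambda(H(\gamma g))}=\|\gamma g\cdot v_\Lambda\|/\|v_\Lambda\|$ at each place; consequently $|\phi_\lambda(\gamma g)|$ can be expressed as a product of such norms with exponents determined by $\lambda$. Choosing $S$ to be a sufficiently small compact open subgroup of $U_{\mathbb A}$, the pro-unipotent action of $v\in S$ on these highest-weight vectors perturbs the relevant norms in a $\gamma$-uniform way, and the Tits cone hypothesis $h\in H_{\mathfrak C}$ ensures that the dominant components responsible for $\|\gamma g\cdot v_\Lambda\|$ are not washed out when compared with $\|\gamma vg\cdot v_\Lambda\|$. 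Assembling the local estimates across the places of $F$ then yields a single uniform constant $C$.

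The main obstacle lies in this third step. The affine case \cite{LL} relies on an explicit coordinate description of $U_{\mathbb A}$ via formal Laurent expansions, which is unavailable for a general symmetrizable Kac--Moody group, so the representation-theoretic replacement must be made intrinsic. The delicate point is making the pro-unipotent comparison estimates uniform in the infinite sum over $\gamma$, while simultaneously leveraging both the Godement range on $\lambda$ and the Tits cone hypothesis on $h$; this is where the novel input of the proof will concentrate.
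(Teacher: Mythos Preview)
Your first two steps---constant term convergence via the Gindikin--Karpelevich expansion and the \cite{CGLLM}-style Weyl-group estimate, followed by Tonelli to obtain almost-everywhere convergence on $(\Gamma\cap U_{\mathbb A})\backslash U_{\mathbb A}\times H_{\mathcal K}$---coincide with the paper's Proposition~\ref{pro-conv-const} and Corollary~\ref{cor:almost}.

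The divergence is in the upgrade step. You propose a termwise majorization $|\phi_\lambda(\gamma g)|\le C\,|\phi_\lambda(\gamma vg)|$ uniform in $\gamma$ and in $v$ ranging over a compact open $S\subset U_{\mathbb A}$, to be proved by norm comparisons on highest-weight vectors. This is the strategy of \cite{CGLLM} over $\mathbb R$, and there it is exactly the step that forces extra hypotheses on the root system; you correctly flag it as ``the main obstacle'' but leave it unresolved. The paper bypasses this entirely by exploiting a feature specific to the nonarchimedean setting: termwise right $\mathbb K$-invariance of $\phi_\lambda$. Writing $g=uh$, one takes $S=hU_{\mathbb K}h^{-1}$ and observes that for $u'\in S$ one has $uu'h=uh\cdot(h^{-1}u'h)$ with $h^{-1}u'h\in\mathbb K$, so $E_\lambda(uu'h)=E_\lambda(uh)$ \emph{exactly}, not merely up to a constant. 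The representation-theoretic work is then relocated to a single measure statement (Lemma~\ref{lem-pm'}): for $h\in H_{\bar{\mathfrak C}}$ the image of $hU_{\mathbb K}h^{-1}$ in $(\Gamma\cap U_{\mathbb A})\backslash U_{\mathbb A}$ has positive measure. That lemma is proved by writing $h=w^{-1}h_1w$ with $h_1\in H_{\bar{\mathcal C}}$, decomposing $U_{\mathbb A}=U^wU_w$, and showing via the action on weight vectors that $h^{-1}U^w_{\mathbb K}h\subset U^w_{\mathbb K}$, while the finite-dimensional factor $U_w\cong\mathbb A^{\ell(w)}$ gives commensurability of $hU_{w,\mathbb K}h^{-1}$ with $U_{w,\mathbb K}$.

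In short: your outline is sound through almost-everywhere convergence, but the comparison inequality you aim for is both harder and unnecessary here. The paper's key simplification is to replace your uniform bound by an exact equality coming from right $\mathbb K$-invariance, which is what allows the result to go through with no restriction on the root system.
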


In comparison with the real case considered in \cite{CGLLM}, we do not impose conditions on the root system of $G$ .

We conclude the introduction with an outline of what follows. 
In Section~\ref{Zform}, we define an adelic Kac--Moody group $G_{\mathbb{A}}$ over a function field and recall its Iwasawa decomposition (Proposition~\ref{thm:AdelicIwasawa}).
In Section~\ref{constant-term}, we define Eisenstein series on $G_{\mathbb{A}}$ and establish convergence of their constant terms (Proposition~\ref{pro-conv-const}). In Section~\ref{Conv}, we prove the main result Theorem \ref{conv.thm.intro} after establishing a crucial lemma (Lemma \ref{lem-pm'}) .

\subsection*{Acknowledgments}
We are grateful to Lisa Carbone, Howard Garland, Stephen D. Miller and Manish Patnaik for helpful discussions.

\section{Preliminaries} \label{Zform}

In this section, we fix notation and recall the Iwasawa decomposition.

Let $A=(a_{ij})_{i,j\in I}$ be a non-singular\footnote{This condition excludes the affine case which is well understood.} symmetrizable generalized Cartan matrix, indexed by $I=\{1,\dots,n\}$, let $\left(\mathfrak{h}_{\mathbb{C}},\Delta,\Delta^{\vee}\right)$ be a realisation of $A$ with $\Delta=\{ \alpha_1, ... , \alpha_n\} \subset \mathfrak h_{\mathbb{C}}^*$ (resp. $\Delta^{\vee}=\{ \alpha^\vee_1, ... , \alpha^\vee_n \} \subset \mathfrak h_{\mathbb{C}}$), and let $\mathfrak g_{\mathbb C}$ be the associated {complex} Kac--Moody algebra.
Since $A$ is non-singular, we have $\mathfrak{h}_{\mathbb{C}}=\mathrm{Span}_{\mathbb{C}}\{ \alpha_1^\vee, ... , \alpha_n^\vee\} $ (resp. $\mathfrak{h}^{\ast}_{\mathbb{C}}=\mathrm{Span}_{\mathbb{C}}\{ \alpha_1, ... , \alpha_n\} $).
We denote by $ Q\subseteq \mathfrak{h}_{\mathbb{C}}^*$ (resp. $ Q^{\vee}\subseteq\mathfrak{h}_{\mathbb{C}}$) the integral linear span of $\Delta$ (resp. $\Delta^{\vee}$).

Let $\Phi$ be the set of roots of $\mathfrak{g}_{\mathbb{C}}$, let $\Phi_{+}$ (resp. $\Phi_{-}$) be the set of positive (resp. negative) roots with respect to $\Delta$, and let $W$ be the Weyl group of $\mathfrak g_{\mathbb C}$ generated by the simple reflections $w_i$ ($i\in I$) corresponding to the simple roots $\alpha_i$.
A root $\alpha \in \Phi$ is called a {\em real} root if there exists $w \in W$
such that $w\alpha$ is a simple root. A root $\alpha$ which is not real is referred to as {\em imaginary}.
For each real root $\alpha$, written as $w\alpha_i$
for some $w \in W$ and $i \in I$, its associated coroot is well-defined by the formula $\alpha^\vee = w \alpha_i^\vee$.
For $w\in W$, we define 
\begin{equation}\label{eq.Phiw}
\Phi_w=\Phi_{+}\cap w^{-1}\Phi_{-}.
\end{equation}
Denote by $e_i$ and $f_i$, $i \in I$, the Chevalley generators of $\mathfrak{g}_{\mathbb{C}}$.
Let ${\mathcal U}_{\mathbb{C}}$ be the universal enveloping algebra of $\mathfrak{g}_{\mathbb C}$,
and let ${\mathcal U}_{{\mathbb{Z}}}\subseteq {\mathcal U}_{{\mathbb{C}}}$ be the ${\mathbb{Z}}$--subalgebra generated by the set:
\[
\left\{\dfrac{e_i^{m}}{m!},\ \dfrac{f_i^{m}}{m!},\ \left (\begin{matrix}
h\\ m\end{matrix}\right ):i\in I,h\in Q^{\vee},m\in\mathbb{Z}_{\geq 0}\right\},
\] where  $\left (\begin{matrix} h\\ m\end{matrix}\right ) \coloneqq \frac{h(h-1)\cdots(h-m+1)}{m!}$.
Fix a nonzero 
dominant integral weight $\Lambda \in \mathfrak h_{\mathbb C}^*$,
let $\left(V,\pi\right)$
be the corresponding irreducible highest weight representation of $\mathfrak{g}_{\mathbb C}$,
and let $v\in V$ be a non-zero highest weight vector.
We set $V_{\mathbb{Z}}\ =\ \mathcal{U}_{\mathbb{Z}}\cdot v$ and, for a field $F$, we set $V_{F}=F\otimes_{\mathbb{Z}}V_{\mathbb{Z}}$. 

For all $i\in I$, the Chevalley generators $e_i,f_i$ are locally nilpotent on $V_{F}$.
Given $i\in I$ and $r,s\in F$, we introduce:
\begin{equation}\label{eq.ualphai}
u_{\alpha_i}(r)=\exp(\pi(re_i)),\ \ u_{-\alpha_i}(s)=\exp(\pi(sf_i))\in\mathrm{Aut}\left(V_F\right).
\end{equation}
We define $G^0_{F}$ to be the subgroup of $\mathrm{Aut}\left(V_{F}\right)$ generated by $u_{\alpha_i}(r)$ and $u_{-\alpha_i}(s)$, with $r,s$ varying in $F$ and $i$ varying in $I$.

Choose a coherently ordered basis $\mathfrak{X}=\{v_1,v_2,\dots\}$ of $V_{\mathbb Z}$, as defined in \cite[Section~5]{CG}.
For $t\in\mathbb{Z}_{>0}$, let $A_t\subset G^0_F$ be the following subset:
\[
A_t=\left\{ g \in G^0_F : g v_i =v_i, \ i=1, 2, \dots , t \right\}.
\]
The sets $A_t$, indexed by $t\in\mathbb{Z}_{>0}$, form a base of open neighborhoods of the identity to define a topology $\tau$ on $G^0_F$. 
\begin{definition}\label{def.KMgroup}
The {\em Kac--Moody group} $G_F$ is the completion
of $G^0_F$ with respect to $\tau$.
\end{definition}
The group $G_F$ in Definition~\ref{def.KMgroup} is usually referred to as the representation theoretic construction of a Kac--Moody group. 
One may to define a Kac--Moody group through other means, for example, see \cite{Ku, T}.

Let $B^0_F$ be the subgroup of $G^0_F$ consisting of the elements represented by upper triangular matrices with respect to $\mathfrak{X}$.
For $i\in I$ and $s\in F^{\times}$, we set:
\begin{equation}\label{eq.walphai}
w_{i}(s) = u_{\alpha_i}(s) u_{-\alpha_i}(-s^{-1}) u_{\alpha_i}(s)\in G^0_F,
\end{equation}
where $u_{\alpha_i}$ is as in equation~\eqref{eq.ualphai}. 
For a real root $\alpha$, choose once for all an expression  
  $\alpha=w_{i_1}\cdots w_{i_\ell}\alpha_i$ for some $i\in I$, and define the
corresponding one-parameter subgroup 
\[
  u_\alpha(s) = w u_{\alpha_i}( s)w^{-1} \in \textrm{Aut}(V_{F}) \qquad (s \in F),
\]
where   $w =w_{i_1}(1)\cdots w_{i_\ell}(1)$.

For $t \in \mathbb Z_{>0}$ we let $V_t$ be the span of the $v_m \in \mathfrak{X}$ for $m \le t$, so that $B^0_FV_t \subseteq V_t$ for each $t$.
Let $B_{F,t}$ be the image of $B^0_F$ in $\mathrm{Aut}(V_t)$.
For $t'\geq t$, we have surjective homomorphisms $\pi_{tt'}: B_{F,t'} \longrightarrow B_{F,t}$.
We define $B_F\leq G_F$ to be the projective limit of the projective family $\{ B_{F,t}, \pi_{tt'} \}$.

Let $N_F$ be the subgroup of $G_F$ generated by $w_{i}(s)$, $i \in I$, $s \in F^\times$, and let $U_F\subset B_F$ be the subgroup of elements acting as unipotent upper triangular matrices with respect to $\mathfrak{X}$. 
We define, for $i\in I$ and $s\in F^{\times}$,
\begin{equation}\label{eq.hi}
h_{i}(s)=w_{i}(s)w_{i}(1)^{-1}.
\end{equation}
We denote by $H_F\subset G_F$  the group generated by $h_i(s)$ as $i$ varies over $I$ and $s$ varies over $F^{\times}$.

An important fact about the structure of $G_F$ is established by the following proposition.
\begin{proposition}[{\cite[Theorem~6.1]{CG}}]\label{prop.BNpair}
The pair $(B_F, N_F)$ is a $BN$-pair for the group $G_F$.
\end{proposition}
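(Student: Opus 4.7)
The plan is to verify the five axioms of a $BN$-pair for $(B_F, N_F)$. I begin with the identification of $H_F$ with $B_F \cap N_F$ and of the Weyl group $W$ with the quotient $N_F / H_F$, in such a way that the class of $w_i(1) \in N_F$ corresponds to the simple reflection $w_i \in W$. All the required computations are $\mathfrak{sl}_2$-local, using the defining relations among $u_{\pm\alpha_i}(s)$, $w_i(s)$, and $h_i(s)$ that mirror those in $SL_2(F)$; in particular $w_i(1)^2 = h_i(-1) \in H_F$, giving the involution property, while the conjugation action of $w_i(1)$ on $h_j(s)$ reproduces the simple reflection on $\mathfrak{h}_{\mathbb{C}}$, whence injectivity of $W \to N_F/H_F$ from the faithfulness of the $W$-action on $\mathfrak{h}^*_{\mathbb{C}}$.

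The heart of the argument is the Bruhat-type axiom $s B_F w \subseteq B_F w B_F \cup B_F s w B_F$ for a simple reflection $s$. I would first establish a Bruhat decomposition
\[
G^0_F \;=\; \bigsqcup_{w \in W} B^0_F\, \dot w\, B^0_F
\]
at the level of the minimal group $G^0_F$, where $\dot w \in N_F$ is a chosen lift of $w$. The proof proceeds by induction on $\ell(w)$: a product $B^0_F\, w_i(1)\, B^0_F \cdot B^0_F\, \dot w\, B^0_F$ is analysed by commuting $u_{\alpha_i}(s)$ past $\dot w$. When $w^{-1}\alpha_i \in \Phi_+$ the product lands in $B^0_F\, \dot w_i \dot w\, B^0_F$; when $w^{-1}\alpha_i \in \Phi_-$, the $SL_2$-Bruhat identity inside $\langle u_{\pm\alpha_i}(s)\rangle$ allows a rewriting that falls back into $B^0_F \dot w B^0_F$. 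To lift the decomposition to $G_F$, I use that each $B^0_F \dot w B^0_F$ is stable under the topology $\tau$ defined by the neighborhoods $A_t$, together with $B_F = \varprojlim B_{F,t}$, to obtain a matching decomposition $G_F = \bigsqcup_{w} B_F \dot w B_F$ and thereby the axiom in the completion.

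The remaining axioms are comparatively direct. For the non-degeneracy $s B_F s \neq B_F$, one computes $w_i(1) u_{\alpha_i}(s) w_i(1)^{-1} = u_{-\alpha_i}(-s)$, which acts with nonzero strictly lower-triangular entries relative to the coherently ordered basis $\mathfrak{X}$ and therefore cannot lie in $B_F$. For the generation axiom $G_F = \langle B_F, N_F\rangle$, the identity expressing $u_{-\alpha_i}(s)$ as a conjugate of $u_{\alpha_i}(-s)$ by $w_i(1)$ (up to an element of $H_F$) shows that the negative-root generators all lie in $\langle B_F, N_F\rangle$; since $G^0_F$ is generated by the $u_{\pm\alpha_i}(s)$ and sits densely in $G_F$ by construction, the statement extends to the completion.

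The main obstacle is precisely this passage from $G^0_F$ to the completion $G_F$. One has to verify that no new identifications occur in the limit: that $B_F \cap N_F$ remains exactly $H_F$, that distinct Bruhat cells stay disjoint, and that the multiplication rule for double cosets survives the projective limit. This requires examining the filtration $V_1 \subset V_2 \subset \cdots$ and the neighborhoods $A_t$ carefully, checking that the coset attached to any Cauchy limit is the one predicted by its truncations in $B_{F,t}$. Once this stability under completion is in hand, the five axioms transfer from $G^0_F$ to $G_F$ and the proposition follows.
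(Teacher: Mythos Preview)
The paper does not give its own proof of this proposition: it is stated with a citation to \cite[Theorem~6.1]{CG} and used as a black box (only its corollary, the Bruhat decomposition, is invoked later). There is therefore nothing in the present paper to compare your argument against.

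That said, your outline follows the standard route taken in the Carbone--Garland paper and in the earlier loop-group work of Garland: verify the axioms first for the incomplete group $G^0_F$ via $\mathfrak{sl}_2$-local computations and an inductive Bruhat decomposition, then pass to the completion. Your identification of the main technical point---checking that the cell decomposition and the identity $B_F\cap N_F=H_F$ survive the projective limit---is accurate; in \cite{CG} this is handled by working with the action on the finite-dimensional truncations $V_t$ and the coherently ordered basis. As a sketch your proposal is sound, but each of the ``I would'' steps (disjointness of cells after completion, stability of $B^0_F\dot w B^0_F$ under $\tau$, density arguments for generation) requires genuine work that you have only flagged rather than carried out; for a self-contained proof you would need to supply those details or, as the paper does, defer to \cite{CG}.
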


We identify the Weyl group $W$ with the group $N_F/(B_F \cap N_F)$ in such a way that $w_i$ is represented by $w_i(1)$. From a standard property of a $BN$-pair we obtain

\begin{corollary}\label{cor:Bruhat}
The group $G_F$ admits a Bruhat decomposition, that is: 
\begin{equation}\label{eq.Bruhat}
G_F= \bigcup_{w \in W} B_FwB_F,
\end{equation}
in which the union is disjoint.
\end{corollary}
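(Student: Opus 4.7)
My plan is to derive the statement as a purely formal consequence of the Tits system axioms, using the BN-pair supplied by Proposition~\ref{prop.BNpair}. No feature of the Kac--Moody setup enters the argument; $G_F$ could equally well be replaced by any abstract group equipped with a BN-pair, so this is a routine invocation of standard material (cf.\ Bourbaki, \emph{Groupes et alg\`ebres de Lie}, Ch.\ IV).

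For the covering $G_F = \bigcup_{w \in W} B_F w B_F$, I would use the axiom $G_F = B_F N_F B_F$ together with the inclusion $T \coloneqq B_F \cap N_F \subseteq B_F$: writing $N_F = \bigcup_{w \in W} w T$, one absorbs $T$ into the trailing $B_F$, so that the double coset $B_F w B_F$ is well-defined independently of the lift of $w \in W = N_F/T$ to $N_F$, and the union of these double cosets exhausts $G_F$.

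For disjointness, suppose $B_F w B_F = B_F w' B_F$ for some $w, w' \in W$; I would induct on $\ell(w) + \ell(w')$ to show that $w = w'$. The base case $\ell(w) = 0$ gives $w = 1$, hence $B_F w' B_F = B_F$, which forces $w' \in T$ and so $w' = 1$. In the inductive step, write $w = s u$ with $s$ a simple reflection and $\ell(u) = \ell(w) - 1$, and apply the exchange axiom $s B_F u \subseteq B_F u B_F \cup B_F s u B_F$, together with the non-triviality $s B_F s^{-1} \not\subseteq B_F$, to reduce the assumed equality to a comparison among double cosets of strictly smaller total length; the inductive hypothesis then yields $w = w'$. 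The substantive input was Proposition~\ref{prop.BNpair}; I anticipate no real obstacle beyond that.
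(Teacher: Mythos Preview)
Your proposal is correct and matches the paper's approach exactly: the paper simply states that the corollary follows ``from a standard property of a $BN$-pair'' using Proposition~\ref{prop.BNpair}, without giving any further details. If anything, you have written out more of the standard Bourbaki argument than the paper does.
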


From now on, let $F$ be the function field of a smooth projective curve $X$ defined over a finite field $\mathbb{F}_q$.
For a place $v$ of $F$, we denote the corresponding completion by $F_v$. 
Denoting by $\mathcal{O}_v$ the ring of integers in $F_v$, we define $G^0_{\mathcal{O}_v}$ to be the subgroup of $\mathrm{Aut}\left(V_{F_v}\right)$ generated by $u_{\alpha_i}(r)$ and $u_{-\alpha_i}(s)$, with $r,s$ varying in $\mathcal{O}_v$ and $i$ varying in $I$, and define $G_{\mathcal{O}_v}$ to be its completion.
For convenience, we will write:
\[ G_v = G_{F_v}, \quad U_v = U_{F_v}, \quad H_v = H_{F_v}, \quad K_v=G_{\mathcal O_v}. \]
A proof of the following proposition is essentially the same as the affine case, and we do not reproduce it here.
\begin{proposition}[\cite{G80}]\label{iwasawa}
For each place $v$ of the field $F$, the group $G_v$ admits an Iwasawa decomposition. That is:
\begin{equation}\label{eq.LocalIwasawa}
G_{v}= U_{v}H_vK_v.
\end{equation}
\end{proposition}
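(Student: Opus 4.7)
My plan is to adapt Garland's strategy from \cite{G80} essentially unchanged, exploiting the coherently ordered basis $\mathfrak{X}$ of $V_{F_v}$ to reduce the infinite-dimensional Iwasawa problem to a compatible sequence of classical Iwasawa decompositions on the finite-dimensional truncations $V_t = \mathrm{Span}_{F_v}\{v_1,\ldots,v_t\}$. The key reformulation is that $U_v H_v$ coincides with the subgroup of $G_v$ preserving the full flag $\{V_t\}_{t \ge 1}$: its image in each $\mathrm{Aut}(V_t)$ consists of upper-triangular matrices by the construction of $B_{F_v}$ as the projective limit of the family $\{B_{F_v,t}\}$. Consequently, establishing $G_v = U_v H_v K_v$ amounts to producing, for each $g \in G_v$, an element $k \in K_v$ such that $g k^{-1}$ preserves this flag.

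Such a $k$ is constructed inductively on $t$, yielding a sequence $k_t \in K_v$ with $g k_t^{-1}$ preserving $V_1 \subset \cdots \subset V_t$. The base case is the classical Iwasawa decomposition for the reductive quotient acting on $V_1$. For the inductive step, one clears the entries that send $V_t$ into $V_{t+1}/V_t$ by applying one-parameter subgroups $u_{\pm\alpha_i}(s)$ attached to the relevant real roots. When $s \in \mathcal{O}_v$ the element is already in $K_v$; when $s \notin \mathcal{O}_v$, one invokes the relations \eqref{eq.walphai} and \eqref{eq.hi} between $u_{\pm\alpha_i}(s)$, $h_i(s)$, $w_i(1)$, and $u_{\pm\alpha_i}(s^{-1})$ (with $s^{-1} \in \mathcal{O}_v$) to factor the offending element into pieces lying in $H_v$ and $K_v$ respectively. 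By arranging $k_{t+1}^{-1} k_t \in A_t$ at each step, $\{k_t\}$ becomes Cauchy in the topology $\tau$ restricted to $K_v$, and completeness delivers a limit $k \in K_v$ with $g k^{-1} \in U_v H_v$.

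The main obstacle, in my estimation, is the combinatorial bookkeeping of the inductive clearing step: one must verify that at each level the real roots needed to clear $V_{t+1}/V_t$ admit Chevalley generators with entries in $\mathcal{O}_v$ (so the perturbations actually live in $K_v$), and that modifications made at level $t+1$ do not disturb the normalization already achieved on $V_t$. Both are guaranteed by the block-triangular action of the Chevalley generators relative to the coherent basis, together with the structural relations encoded by the $BN$-pair of Proposition~\ref{prop.BNpair}. Since this representation-theoretic structure is identical in the affine and general symmetrizable non-singular cases, the affine argument of \cite{G80} transfers without essential change, which is presumably why the authors elect to omit it.
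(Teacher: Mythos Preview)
The paper does not supply its own proof, remarking only that it is essentially the same as Garland's affine argument in \cite{G80}; your proposal is precisely to carry out that argument, so the approaches coincide. Your closing observation---that the representation-theoretic structure is identical in the affine and general symmetrizable cases, and that this is why the authors omit the details---is exactly the paper's stance.
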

In particular, we may write each $g_v\in G_v$ as a product $g_v=u_vh_vk_v$ with $u_v\in U_v$, $h_v\in H_v$, and $k_v\in K_v$. 
We note that such an expression for $g_v$ is not unique.
{For a choice of expression $g_v=u_vh_vk_v$, we will use the notation $\mathrm{Iw}_{H_v}(g)$ to denote $h_v$.}

Let $\mathcal V$ be the set of places of $F$.
The ad\`{e}le ring $\mathbb A$ of $F$ is the restricted direct product over $\mathcal V$ of $F_v$ with respect to the subrings $\mathcal O_{v}$.
We introduce the following subgroups:
\begin{equation}\label{eq.adelicgroups}
{U}_{\mathbb A} = {\prod_{v \in \mathcal{V}}}' \, {U}_v,
\quad
{H}_{\mathbb{A}} = {\prod_{v \in \mathcal{V}}}'\ {H}_{v},
\quad
{\mathbb K}=\prod_{v \in \mathcal V}{K}_v ,
\end{equation}
in which the first product is restricted with respect to ${U}_{v} \cap {K}_v$, the second product is restricted with respect to ${H}_v\cap{K}_v$, and the third product is unrestricted.
For later use, we also define $B_{\mathbb{A}}\subset G_{\mathbb{A}}$ to be the restricted direct product $\prod'_{v\in\mathcal{V}}B_v$ with respect to $B_v\bigcap K_v$. 

Taking a product of equation~\eqref{eq.LocalIwasawa} over $v\in\mathcal{V}$, we deduce the following proposition.

\begin{proposition}\label{thm:AdelicIwasawa}
The group $G_{\mathbb{A}}$ admits an Iwasawa decomposition, that is:
\begin{equation}\label{eq.adelicIwasawa}
G_{\mathbb A} \, = \, {U}_{\mathbb A} H_{\mathbb A} {\mathbb K}.
\end{equation}
\end{proposition}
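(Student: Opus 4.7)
The plan is to deduce Proposition~\ref{thm:AdelicIwasawa} from the local Iwasawa decompositions of Proposition~\ref{iwasawa} by a standard restricted-product argument. The key observation is that the compatibility between the local decomposition and the subring/subgroup structure defining the adelic restricted products is essentially trivial, because at any place $v$ where the local component already lies in $K_v$ we may take the two other factors to be $1$.

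More concretely, I would begin with an arbitrary $g=(g_v)_{v\in\mathcal{V}}\in G_{\mathbb{A}}$. By the restricted-product definition of $G_{\mathbb{A}}$, there exists a finite set $S\subset\mathcal{V}$ such that $g_v\in K_v$ for all $v\notin S$. For each $v\in S$, I would invoke Proposition~\ref{iwasawa} to write $g_v=u_v h_v k_v$ with $u_v\in U_v$, $h_v\in H_v$, $k_v\in K_v$. For $v\notin S$, I would simply set $u_v=1\in U_v\cap K_v$, $h_v=1\in H_v\cap K_v$ and $k_v=g_v\in K_v$, which is a (trivial) Iwasawa decomposition of $g_v$.

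Assembling these choices, one has $u_v\in U_v\cap K_v$ and $h_v\in H_v\cap K_v$ for all $v\notin S$, hence
\[
u:=(u_v)_{v\in\mathcal{V}}\in U_{\mathbb{A}},\qquad h:=(h_v)_{v\in\mathcal{V}}\in H_{\mathbb{A}},
\]
by the restricted-product descriptions in~\eqref{eq.adelicgroups}. Similarly $k:=(k_v)_{v\in\mathcal{V}}\in\mathbb{K}$ since each $k_v\in K_v$ and the product defining $\mathbb{K}$ is unrestricted. Then $g=uhk$ componentwise, which proves $G_{\mathbb{A}}\subseteq U_{\mathbb{A}}H_{\mathbb{A}}\mathbb{K}$; the reverse inclusion is immediate.

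There is really no substantive obstacle here: the only point to check is that good local components admit an Iwasawa decomposition with all three factors lying in $K_v$, which is handled by the trivial factorisation $g_v=1\cdot 1\cdot g_v$. I would note, however, that as already remarked after Proposition~\ref{iwasawa}, the local factorisations are not unique, so the resulting adelic expression $g=uhk$ depends on the choices made at the places in $S$; this non-uniqueness is what justifies the subsequent notation $\mathrm{Iw}_{H_v}(g)$ for a fixed selection of $h_v$.
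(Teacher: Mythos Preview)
Your argument is correct and is exactly the standard restricted-product argument the paper has in mind: the paper's own ``proof'' is simply the one-line remark that the proposition follows by taking the product of the local Iwasawa decompositions over $v\in\mathcal{V}$. You have merely made explicit the routine verification that the trivial factorisation at almost all places keeps $(u_v)$, $(h_v)$ inside the restricted products $U_{\mathbb{A}}$, $H_{\mathbb{A}}$.
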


As in the local case, an expression $g=uhk$ as per equation~\eqref{eq.adelicIwasawa} is not uniquely determined. For $g\in G_{\mathbb{A}}$ and a choice of expression $g=uhk$, we will use the notation \begin{equation} \label{hgh} \mathrm{Iw}_H(g)=h. \end{equation} 

\section{Kac--Moody Eisenstein series}\label{constant-term}

In this section the Kac--Moody Eisenstein series will be defined, and convergence of its constant term will be established. As a result, we will obtain almost everywhere convergence of the Eisenstein series. 

We maintain the notations from Section \ref{Zform}. 
Furthermore we let $\langle~,~\rangle$ denote the natural pairing between $\mathfrak{h}_{\mathbb{C}}$ and $\mathfrak{h}_{\mathbb{C}}^{\ast}$, 
so that $\langle\alpha_j, \alpha^\vee_i\rangle = a_{ij}$ is the $(i,j)$-entry of the generalized Cartan matrix $A$. 
Recall that the fundamental weights form the basis of $\mathfrak{h}_{\mathbb{C}}^{\ast}$ dual to $\Delta^{\vee}\subset\mathfrak{h}_{\mathbb{C}}$. 
We denote the integral span of the fundamental weights by $P\subset\mathfrak{h}_{\mathbb{C}}^{\ast}$.
Since $A$ is non-singular, there exists a unique 
$\rho\in\mathfrak{h}_{\mathbb{C}}^*$ such that, for all $i\in I$, we have $\langle\rho,\alpha_i^\vee\rangle =1$. 
Set $\mathfrak{h}_{\mathbb{R}}=\mathrm{Span}_{\mathbb{R}}\{\alpha^{\vee}_1,\dots,\alpha^{\vee}_r\}$ (resp. $\mathfrak{h}^{\ast}_{\mathbb{R}}=\mathrm{Span}_{\mathbb{R}}\{\alpha_1,\dots,\alpha_r\}$).

Recall that $F$ is the function field of a smooth projective curve $X$ defined over a finite field $\mathbb{F}_q$ and that, for a place $v$ of $F$, we denote the corresponding completion by $F_v$. 
For clarity, we use the notation $h_{i,v}$ for $h_i$ in equation~\eqref{eq.hi} when it is applied to an element of $F_v^\times$. 
By construction we may write each $h\in H_{\mathbb{A}}$ as $h=(h_v)_{v\in\mathcal{V}}$, where $h_v=\prod_{i\in I}h_{i,v}(t_{i,v})\in H_v$ and $t_{i,v}\in F_v^{\times}$. 
For $v\in \mathcal{V}$, we denote by $|\cdot|_v$ the normalized absolute value on $F_v$. 
For $\lambda\in\mathfrak{h}_{\mathbb{C}}^{\ast}$, we write
\begin{equation}\label{eq.hlambdalocal}
h^{\lambda}=\prod_{i\in I}\prod_{v\in\mathcal{V}}|t_{i,v}|_v^{\left\langle\lambda,\alpha_i^\vee \right\rangle}.
\end{equation}
By definition, for all but finitely many $v\in\mathcal{V}$, we have $h_v\in H_v \cap K_v$.

\begin{lemma} \label{lem-all-but}
For $v\in\mathcal{V}$, if $h_v=\prod_{i \in I} h_{i,v}(t_{i,v}) \in H_v \cap K_v$ then $t_{i,v} \in \mathcal{O}_v{^\times}$ for all $i \in I$. 
\end{lemma}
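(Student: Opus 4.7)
The plan is to extract linear constraints on the valuations $\mathrm{ord}_v(t_{i,v})$ from the requirement that $h_v$ preserve the integral lattice in $V$, and then to solve the resulting system using the hypothesis that the Cartan matrix $A$ is non-singular.

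First I would note that every element of $K_v = G_{\mathcal{O}_v}$, together with its inverse, preserves the $\mathcal{O}_v$-lattice $V_{\mathcal{O}_v} := \mathcal{O}_v \otimes_{\mathbb Z} V_{\mathbb Z}$. Indeed, by construction $K_v$ is the completion of the subgroup generated by the one-parameter unipotents $u_{\pm \alpha_i}(r)$ with $r \in \mathcal{O}_v$, and each such generator manifestly stabilises $V_{\mathcal{O}_v}$. Next, unwinding the definitions \eqref{eq.walphai}--\eqref{eq.hi} one checks that $h_{i,v}(s)$ scales any vector of weight $\mu \in \mathfrak h^*_{\mathbb C}$ by $s^{\langle\mu,\alpha_i^\vee\rangle}$, so that
\[
h_v \cdot x_\mu \;=\; \Bigl(\prod_{i\in I} t_{i,v}^{\langle\mu,\alpha_i^\vee\rangle}\Bigr)\, x_\mu
\qquad\text{for } x_\mu \in V_\mu.
\]
Combined with lattice preservation applied to a primitive integral weight vector in $V_\mu$ (and to $h_v^{-1}$), this yields
\[
\prod_{i\in I} t_{i,v}^{\langle\mu,\alpha_i^\vee\rangle} \in \mathcal{O}_v^\times,
\qquad\text{i.e.,}\qquad
\sum_{i\in I}\langle\mu,\alpha_i^\vee\rangle \,\mathrm{ord}_v(t_{i,v}) \;=\; 0,
\]
for every weight $\mu$ occurring in $V$.

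I would then specialise $\mu$ to enough weights to determine all of the valuations. Taking $\mu=\Lambda$ gives one relation; taking $\mu=\Lambda-\alpha_j$ (which is a genuine weight of $V$ under the standing regularity hypothesis on $\Lambda$ in the Carbone--Garland construction) and subtracting the $\Lambda$-relation gives, for each $j\in I$,
\[
\sum_{i\in I} a_{ji}\,\mathrm{ord}_v(t_{i,v}) \;=\; 0.
\]
In matrix form this is $A\cdot\bigl(\mathrm{ord}_v(t_{i,v})\bigr)_{i\in I}^{\top}=0$. Since $A$ is non-singular, the only solution is $\mathrm{ord}_v(t_{i,v})=0$ for every $i\in I$, which is precisely $t_{i,v}\in\mathcal{O}_v^\times$.

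The main obstacle I anticipate is producing enough independent linear equations to pin down all $n$ valuations: a single weight such as $\Lambda$ contributes only one scalar constraint. The resolution is to use the neighbouring weights $\Lambda-\alpha_j$ so that the coefficient matrix of the resulting homogeneous linear system is exactly $A$; the non-singularity of $A$ -- the very hypothesis already imposed in Section \ref{Zform} -- then closes the argument. A secondary, essentially bookkeeping, issue is verifying that one can indeed choose integral weight vectors $x_\mu\in V_\mu\cap V_{\mathcal{O}_v}$ that are primitive modulo the maximal ideal of $\mathcal{O}_v$; this is immediate from the fact that the coherently ordered basis $\mathfrak{X}$ of $V_{\mathbb Z}$ (on which $H_F$ acts diagonally) descends to an $\mathcal{O}_v$-basis of $V_{\mathcal{O}_v}$ respecting the weight decomposition.
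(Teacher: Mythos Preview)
Your proof is correct and follows essentially the same approach as the paper: apply $h_v$ and $h_v^{-1}$ to integral weight vectors of weights $\Lambda$ and $\Lambda-\alpha_j$ to obtain a homogeneous linear system in the $\mathrm{ord}_v(t_{i,v})$ with coefficient matrix $A$ (up to transpose), then invoke the non-singularity of $A$. The only discrepancy is the index order---you write $a_{ji}$ where the paper's convention $\langle\alpha_j,\alpha_i^\vee\rangle=a_{ij}$ gives $a_{ij}$---but this is immaterial since $A$ is non-singular if and only if $A^\top$ is.
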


\begin{proof}
For $i\in I$, denote the corresponding fundamental weight by $\Lambda_i$. Recall that we fixed a regular dominant integral weight $\Lambda = \sum_{i \in I} m_i \Lambda_i$ $(m_i \in \mathbb Z_{>0})$ to construct $G_v$ is in Section~\ref{Zform}. Choose a highest weight vector $v_0\in V_{\mathbb{Z}}$ (with weight $\Lambda$) and consider $1\otimes v_0\in V_{F_v}=F_v\otimes V_{\mathbb{Z}}$. 
Then we have:
\[h_v(1\otimes v_0)=\prod_{i \in I} t_{i,v}^{m_i}\otimes v_0.\]
Since $K_v$ preserves $V_{\mathcal{O}_v}$, we deduce that $\sum_{i \in I} m_i\,  \mathrm{ord}_v(t_{i,v})\geq0$, where $\mathrm{ord}_v$ is the $v$-adic valuation on $F_v$.
On the other hand, since $H_v\cap K_v$ is a group, we have $h_v^{-1}=\prod_{i\in I}h_{\alpha_i}(t_{i,v}^{-1})\in H_v\cap K_v$.
By the same argument, we deduce that $\sum_{i \in I} m_i \, \mathrm{ord}_v(t_{i,v})\leq0$. Consequently, we have
\begin{equation} \label{suiI} \sum_{i \in I} m_i \, \mathrm{ord}_v(t_{i,v})=0. \end{equation}
Since $\Lambda$ is regular, i.e., $m_j >0$ for each $j$, we can choose a weigh vector $v_j \in V_{\mathbb Z}$ with weight $\Lambda - \alpha_j$ for each $j \in I$.
By applying $h_v$ and $h_v^{-1}$ to $v_j$, we obtain
\begin{equation} \label{suiIJ} \sum_{i \in I} (m_i-a_{ij}) \, \mathrm{ord}_v(t_{i,v}) =0, \end{equation} where $A=(a_{ij})$ is the generalized Cartan matrix.
Combining \eqref{suiI} and \eqref{suiIJ}, we have \[ \sum_{i \in I} a_{ij} \mathrm{ord}_v (t_{i,v}) =0, \qquad \text{ for each } j . \] Since $A$ is nonsingular, we have $\mathrm{ord}_v (t_{i,v}) =0$ and $t_{i,v} \in \mathcal O^\times$ for all $i \in I$, as desired. 
\end{proof}

By Lemma \ref{lem-all-but}, the infinite product in equation~\eqref{eq.hlambdalocal} converges. Denoting the ad\`ele $(t_{i,v})_{v\in\mathcal{V}}\in \mathbb{A}$ by $t_i$ and the adelic norm by $|\cdot|$, we have 
\begin{equation}\label{eq.hlambdaadelic}
h^{\lambda}=\prod_i|t_i|^{\langle\lambda,\alpha_i^\vee\rangle}, \qquad \lambda \in \mathfrak h_{\mathbb C}^*.
\end{equation}

\begin{lemma}
The function $g\mapsto \mathrm{Iw}_H(g)^{\lambda}$ is  well-defined on $G_{\mathbb{A}}$ for $ \lambda \in \mathfrak h_{\mathbb C}^*$, where the notation $\mathrm{Iw}_H(g)$ is introduced in \eqref{hgh}. 
\end{lemma}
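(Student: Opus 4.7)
The plan is to reduce well-definedness to the invariance statement
$h^{\lambda}=1$ for $h\in H_{\mathbb{A}}\cap\mathbb{K}$, which is a direct consequence of the preceding Lemma~\ref{lem-all-but}. Concretely, I would first observe that if $h=(h_v)_{v\in\mathcal{V}}\in H_{\mathbb{A}}\cap\mathbb{K}$ and we write $h_v=\prod_{i\in I}h_{i,v}(t_{i,v})$, then by Lemma~\ref{lem-all-but} we have $t_{i,v}\in\mathcal{O}_v^{\times}$ for every $i\in I$ and every $v\in\mathcal{V}$. In particular $|t_{i,v}|_v=1$ for all $i,v$, so each factor in the product \eqref{eq.hlambdaadelic} defining $h^{\lambda}$ equals $1$, giving $h^{\lambda}=1$ for every $\lambda\in\mathfrak{h}_{\mathbb{C}}^{\ast}$.

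Next I would show that any two Iwasawa decompositions of the same element differ on the $H_{\mathbb{A}}$-factor by an element of $H_{\mathbb{A}}\cap\mathbb{K}$. Suppose $g=uhk=u'h'k'$ with $u,u'\in U_{\mathbb{A}}$, $h,h'\in H_{\mathbb{A}}$ and $k,k'\in\mathbb{K}$. Rearranging gives $h^{-1}u^{-1}u'h'=kk'^{-1}\in\mathbb{K}$. Since $H_{\mathbb{A}}$ normalizes $U_{\mathbb{A}}$, the left-hand side can be rewritten as $\tilde u\cdot\xi$, where $\xi:=h^{-1}h'\in H_{\mathbb{A}}$ and $\tilde u:=(h^{-1}u^{-1}u'h)\in U_{\mathbb{A}}$. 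So it suffices to verify the local statement: if $\tilde u_v\xi_v\in K_v$ with $\tilde u_v\in U_v$ and $\xi_v\in H_v$, then $\xi_v\in H_v\cap K_v$ (and $\tilde u_v\in U_v\cap K_v$). This is extracted from the local Iwasawa decomposition of Proposition~\ref{iwasawa}: at each place $v$ the element $\tilde u_v\xi_v\in K_v$ has the trivial Iwasawa decomposition $1\cdot 1\cdot(\tilde u_v\xi_v)$ as well as the decomposition $\tilde u_v\cdot\xi_v\cdot 1$, and comparing the two by the same rearrangement trick localized to $v$ shows that the $H_v$-component must lie in $H_v\cap K_v$.

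Combining these two steps, if $g=uhk=u'h'k'$ then $\xi=h^{-1}h'\in H_{\mathbb{A}}\cap\mathbb{K}$, and by the first step $\xi^{\lambda}=1$. Using the multiplicativity
\begin{equation*}
(h\xi)^{\lambda}=\prod_{i\in I}|t_i s_i|^{\langle\lambda,\alpha_i^{\vee}\rangle}=\prod_{i\in I}|t_i|^{\langle\lambda,\alpha_i^{\vee}\rangle}\cdot\prod_{i\in I}|s_i|^{\langle\lambda,\alpha_i^{\vee}\rangle}=h^{\lambda}\xi^{\lambda},
\end{equation*}
where we have written $\xi$ in the analogue of \eqref{eq.hlambdaadelic} with coordinates $s_i$, we conclude $(h')^{\lambda}=h^{\lambda}\xi^{\lambda}=h^{\lambda}$. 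Hence the value of $\mathrm{Iw}_H(g)^{\lambda}$ is independent of the choice of Iwasawa factorization of $g$, proving well-definedness.

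The main obstacle I anticipate is the rigorous justification of the local step (that $U_v H_v \cap K_v\subseteq(U_v\cap K_v)(H_v\cap K_v)$), which depends on the specific construction of $G_v$ as a completion and on the explicit structure of $H_v$. Given that $A$ is non-singular, the map $(F_v^{\times})^{I}\to H_v$, $(t_i)\mapsto\prod_i h_{i,v}(t_i)$, is an isomorphism, so $H_v$-coordinates are unambiguous, and the argument reduces to comparing highest-weight actions as in the proof of Lemma~\ref{lem-all-but}; I would cite Proposition~\ref{iwasawa} and invoke this uniqueness locally rather than reprove it here.
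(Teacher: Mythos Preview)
Your approach is essentially the same as the paper's: both reduce well-definedness to the claim that any two Iwasawa $H$-components differ by an element of $H_{\mathbb{A}}\cap\mathbb{K}$, and then invoke Lemma~\ref{lem-all-but} to see that such an element has $(\cdot)^{\lambda}=1$. The paper rearranges slightly differently---it writes $k_1k_2^{-1}=(u_1h_1)^{-1}u_2h_2\in B_{\mathbb{A}}\cap\mathbb{K}$ and then appeals directly to the structural fact
\[
B_{\mathbb{A}}\cap\mathbb{K}=(U_{\mathbb{A}}\cap\mathbb{K})\rtimes(H_{\mathbb{A}}\cap\mathbb{K})
\]
to extract $h_3\in H_{\mathbb{A}}\cap\mathbb{K}$ with $h_2=h_1h_3$---but the content is identical to your $\xi=h^{-1}h'\in H_{\mathbb{A}}\cap\mathbb{K}$.

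The one genuine gap in your write-up is the justification of the local step $U_vH_v\cap K_v\subseteq(U_v\cap K_v)(H_v\cap K_v)$. Your proposed argument (``compare the two Iwasawa decompositions $1\cdot1\cdot(\tilde u_v\xi_v)$ and $\tilde u_v\cdot\xi_v\cdot1$ by the same rearrangement trick'') is circular: applying that trick just returns you to the hypothesis $\tilde u_v\xi_v\in K_v$ and does not force $\xi_v\in K_v$. You correctly diagnose this at the end, and the fix you sketch there is the right one: since $\tilde u_v\in U_v$ is unipotent upper triangular, the diagonal entries of $\tilde u_v\xi_v$ coincide with those of $\xi_v$; as $\tilde u_v\xi_v$ and its inverse preserve $V_{\mathcal{O}_v}$, these diagonal entries lie in $\mathcal{O}_v^{\times}$, and then exactly the weight argument of Lemma~\ref{lem-all-but} (applied to $\Lambda$ and $\Lambda-\alpha_j$) forces each $t_{i,v}\in\mathcal{O}_v^{\times}$, hence $\xi_v\in H_v\cap K_v$. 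Replace the circular paragraph with this (or simply cite the semidirect product decomposition, as the paper does) and your proof is complete.
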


\begin{proof}
Assume that $g=u_1h_1k_1=u_2h_2k_2$ with respect to the Iwasawa decomposition. Rearranging $u_1h_1k_1=u_2h_2k_2$, we get
\[k_1k_2^{-1}=(u_1h_1)^{-1}u_2h_2\in B_{\mathbb{A}}\cap\mathbb{K}.\]
Since $B_{\mathbb{A}}\cap \mathbb{K}=(U_{\mathbb{A}}\cap\mathbb{K})\rtimes(H_{\mathbb{A}}\cap\mathbb{K})$, we may write $k_1k_2^{-1}=u_3h_3$ for some $u_3\in U_{\mathbb{A}}\cap\mathbb{K}$ and $h_3\in H_{\mathbb{A}}\cap\mathbb{K}$. Therefore, since  $H_{\mathbb{A}}$ normalizes $U_{\mathbb{A}}$, we may write
\[u_2h_2k_2=u_1h_1k_1=u_1h_1u_3h_3k_2 = u_1u_4h_1h_3 k_2\]
for some $u_4\in U_{\mathbb{A}}$, and obtain $h_2=h_1h_3$ from $H_\mathbb{A} \cap U_{\mathbb A}$ being trivial.  To conclude, note that 
\[\mathrm{Iw}_H(u_2h_2k_2)^\lambda=h_2^{\lambda}=(h_1h_3)^{\lambda}=h_1^{\lambda}h_3^{\lambda}=h_1^{\lambda}=\mathrm{Iw}_H(u_1h_1k_1)^\lambda .\]
\end{proof}

For a place $v$ of $F$, the natural inclusion $F\rightarrow F_v$ induces a map $\iota_v:G_F\rightarrow G_v.$  Therefore, we get a map $\iota=\prod\iota_v:G_F\rightarrow\prod G_{v}.$ Define $\Gamma=\{\gamma\in G_F:\iota(\gamma)\in G_{\mathbb{A}}\}$. Whenever appropriate, elements of  $\Gamma$ will be identified with their images in $G_{\mathbb A}$ via $\iota$.

\begin{remark}
As the definition of $\Gamma$ suggests, it is not always true that the image of $\iota$ is contained in $G_{\mathbb{A}}$. In the affine case, it is discussed, for example, in \cite[Example 3.13]{LL}.  
\end{remark}

Now we define the Kac--Moody Eisenstein series.
\begin{definition}\label{Eisenstein.definition}
Given $\lambda\in \mathfrak h_{\mathbb{C}}^*$, the {\em Eisenstein series} $E_{\lambda}$ is defined to be:
\begin{equation}\label{eq:Eisenstein}
E_{\lambda}(g)\quad = \sum_{\gamma\in \Gamma\cap{B_{\mathbb{A}}}\backslash \Gamma}
\mathrm{Iw}_H(\gamma g)^{\lambda+\rho}, \qquad g \in G_{\mathbb{A}}.
\end{equation}
\end{definition}

Note that we may regard $E_{\lambda}$ as a function on $(\Gamma\cap U_{\mathbb{A}}\backslash U_{\mathbb{A}}) \times H_{\mathbb{A}}$.
In the rest of this section we will define the constant term of $E_{\lambda}$ and establish convergence of the constant term. The result will be crucial in proving the main theorem. 

As in \cite{G04, LL}, the space $\Gamma\cap U_{\mathbb{A}} \backslash U_{\mathbb{A}}$ admits a projective limit measure $du$, which is a $U_{\mathbb{A}}$-invariant probability measure.

\begin{definition}\label{def:const}
Given $\lambda\in \mathfrak h_{\mathbb{C}}^*$, the {\em constant term} of $E_\lambda$ is defined to be:
\begin{equation}\label{eq.constdef}
E^\sharp_\lambda(g)=\int_{\Gamma\cap U_{\mathbb{A}}\backslash U_{\mathbb{A}}}E_\lambda(ug)du, \qquad g \in G_{\mathbb{A}}.
\end{equation}
\end{definition}

\begin{remark}
In the proof of Proposition \ref{pro-conv-const}, we only need $\lambda \in \mathfrak h_{\mathbb R}^*$. In this case, we interpret the infinite sum $E_\lambda$ as a function taking values in $\mathbb{R}_+\cup\{\infty\}$, and $E^\sharp_\lambda$ is well-defined. 
After convergence is established for $\lambda \in \mathfrak h_{\mathbb R}^*$, the above definition of $E_\lambda^{\sharp}$ will be valid for $\lambda \in \mathfrak h_{\mathbb C}^*$ by dominance. 
\end{remark}

Applying the Gindikin--Karpelevich formula,  a formal calculation as in \cite{La1, G04} 
yields 
\begin{equation} \label{eee}
E^\sharp_\lambda(g)=\sum_{w\in W}\mathrm{Iw}_H(g)^{w\lambda+\rho}c(\lambda,w), \ \ \ c(\lambda,w)=\prod_{ \alpha \in \Phi_w}\frac{\zeta_X\left(\left\langle\lambda, \alpha^\vee\right\rangle\right)}{\zeta_X\left(1+\left\langle\lambda, \alpha^\vee\right\rangle\right)},
\end{equation}
where $\Phi_w$ is defined in equation~\eqref{eq.Phiw} and $\zeta_X(s)$ is the zeta function of the smooth projective curve $X$ defined over the finite field $\mathbb{F}_q$.

We let $\mathcal{C}=\{x\in \mathfrak{h}_{\mathbb{R}}: \langle\alpha_i, x\rangle>0,~  i\in I\}$ (resp. $\mathcal{C}^*=\{\lambda\in\mathfrak{h}_{\mathbb{R}}^*: \langle\lambda,\alpha_i^\vee\rangle>0, ~ i\in I\}$),
and let $\mathfrak{C} = \mathrm{int}\left(\bigcup_{w \in W} \, w \bar{\mathcal C}\right)$ (resp. $\mathfrak{C}^* = \mathrm{int}\left(\bigcup_{w \in W} \, w \bar{\mathcal C^{\ast}} \right ) $ ) where $\bar{\mathcal{C}}$ (resp. $\bar{\mathcal{C}^{\ast}}$) denotes the closure.
For $h=(h_v)_{v \in \mathcal V} \in H_{\mathbb A}$ with $h_v= \prod_{i \in I} h_{i,v}(t_{i,v})$,  we set $t_i=(t_{i,v})_{v\in\mathcal{V}} \in \mathbb A^\times$ for $i \in I$ as before and write $h= \prod_{i \in I} h_i(t_i)$ by abusing notation slightly.
Using the map $(t_1,\dots, t_n)\mapsto \prod_{i \in I} h_{\alpha_i}(t_i)$, we may identify $(\mathbb{A}^\times)^n$ with $H_{\mathbb{A}}$. In particular, $H_{\mathbb A}$ has the Haar measure. 

We define $H_\mathcal{C}$ (resp.  $H_\mathfrak{C}$) to be
\begin{equation}\label{eq.AdelicTits}
\left\{\prod_{i\in I} h_i(t_i)\in H_{\mathbb{A}}:\sum_{i\in I}\log|t_i| \, \alpha_i^\vee \in \mathcal C \ (\text{\rm resp. } \mathfrak C) \right \}. 
\end{equation}
We note from \eqref{eq.hlambdalocal} that 
\begin{equation} \label{eqhl} H_\mathcal{C}=\left\{h\in H_{\mathbb{A}}:h^{\alpha_i}>1 , i \in I\right \}.\end{equation} 
More generally, for any $\mathcal{K}\subset\mathfrak{C}$, we define 
\[H_{\mathcal{K}}=\left\{\prod_{i \in I} h_i(t_i)\in H_{\mathbb{A}}:\sum_{i \in I}\log|t_i|\, \alpha_i^\vee \in \mathcal K \right \}.\]

The following is a function-field analogue of  \cite[Theorem~3.5]{CGLLM}. Since the proof is similar, we omit it. 
\begin{lemma}\label{lem:absuniMh}
If $\lambda\in\mathfrak{h}_{\mathbb{C}}^{\ast}$ satisfies $\mathrm{Re}(\lambda)\in\mathcal{C}^{\ast}$, then, for any $M>0$ and any compact $\mathcal{K}\subset\mathfrak{C}$, the sum $\sum_{w\in W}M^{\ell(w)}h^{w\lambda}$ converges absolutely and uniformly for $h$ in $H_{\mathcal{K}}$.  
\end{lemma}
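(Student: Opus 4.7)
The plan is to follow the strategy of the real-field version \cite[Theorem~3.5]{CGLLM}: reduce to a length-weighted exponential sum over the Weyl group on a compact subset of the fundamental chamber, and then bound it via the standard dominant-weight estimate $\rho - w\rho = \sum_{\alpha \in \Phi_{w^{-1}}}\alpha$. The only role of the function field is the parameterisation \eqref{eq.hlambdaadelic}.

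First, since $|h^{w\lambda}| = h^{w\mathrm{Re}(\lambda)}$ for every $w$, absolute convergence reduces to the case $\lambda \in \mathcal{C}^*$ real. Writing $h = \prod_{i \in I}h_i(t_i) \in H_{\mathcal K}$ and $x = \sum_{i \in I}\log|t_i|\,\alpha_i^\vee \in \mathcal K$, formula \eqref{eq.hlambdaadelic} identifies $h^{w\lambda} = \exp\langle w\lambda, x\rangle$, so uniform convergence on $H_{\mathcal K}$ is equivalent to uniform convergence of $\sum_{w \in W} M^{\ell(w)} \exp\langle w\lambda, x\rangle$ on the compact set $\mathcal K \subset \mathfrak C$. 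Using the Tits-cone property that only finitely many $W$-translates of $\bar{\mathcal C}$ meet any compact in $\mathfrak C$, I would then cover $\mathcal K$ by finitely many pieces of the form $\mathcal K \cap w_0\bar{\mathcal C}$. For $x = w_0 y$ with $y \in \bar{\mathcal C}$ compact, substituting $w \mapsto w_0 w'$ and using $\ell(w_0 w') \leq \ell(w_0) + \ell(w')$ absorbs a factor $M^{\ell(w_0)}$ into a constant, reducing the claim to the uniform convergence of $\sum_{w} M^{\ell(w)} \exp\langle w\lambda, y\rangle$ on a compact $\mathcal{K}_0 \subset \bar{\mathcal C}$.

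For $y$ strictly interior to $\bar{\mathcal C}$ with $\langle \alpha_i, y\rangle \geq \delta > 0$ uniformly on $\mathcal K_0$, the central bound is obtained by decomposing $\lambda = m_{*}\rho + \lambda'$ with $m_{*} = \min_i\langle \lambda, \alpha_i^\vee\rangle > 0$ and $\lambda' \in \bar{\mathcal{C}^*}$, then combining $\langle w\lambda', y\rangle \leq \langle \lambda', y\rangle$ (by dominance) with $\langle w\rho, y\rangle = \langle \rho, y\rangle - \sum_{\alpha \in \Phi_{w^{-1}}}\langle \alpha, y\rangle \leq \langle \rho, y\rangle - \delta\ell(w)$ (from the identity $\rho - w\rho = \sum_{\alpha \in \Phi_{w^{-1}}}\alpha$ combined with $|\Phi_{w^{-1}}|=\ell(w)$). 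This yields a termwise bound $M^{\ell(w)}\exp\langle w\lambda, y\rangle \leq e^{\langle \lambda, y\rangle}(Me^{-m_{*}\delta})^{\ell(w)}$, summable over $w$ as soon as $Me^{-m_{*}\delta}$ lies inside the disk of convergence of the Poincar\'e series of $W$.

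The main obstacle is to refine the bound so that it works (i)~for arbitrarily large $M$, and (ii)~when $\mathcal K_0$ touches the walls of $\bar{\mathcal C}$ and hence $\delta$ degenerates to $0$. Both points are handled in \cite[Theorem~3.5]{CGLLM} by sharpening the height bound on $\sum_{\alpha \in \Phi_{w^{-1}}}\langle \alpha, y\rangle$ and by stratifying $\mathcal K_0$ according to the set of vanishing walls: on each stratum the finite parabolic stabiliser $W_y$ (finite precisely because $y$ lies in the Tits cone) is factored out, and the induction descends to a lower-dimensional open chamber where the interior estimate applies. Since none of these ingredients depends on the choice of coefficient field, the real-field argument transfers to our function-field setting after the parameterisation step is in place, giving the lemma.
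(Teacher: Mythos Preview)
Your proposal is correct and follows essentially the same approach as the paper: the paper omits the proof entirely, noting only that it is a function-field analogue of \cite[Theorem~3.5]{CGLLM} with a similar proof, and your outline is precisely a sketch of how that real-field argument transfers once the parameterisation $h^{w\lambda}=\exp\langle w\lambda,x\rangle$ from \eqref{eq.hlambdaadelic} is in place. Your honest flagging of the two obstacles (arbitrary $M$, and $\mathcal K_0$ touching walls) and deferral of their resolution to \cite{CGLLM} matches the paper's own stance.
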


Now we state the main result of this section.
\begin{proposition} \label{pro-conv-const}
If $\lambda\in\mathfrak{h}_{\mathbb{C}}^*$ satisfies $\mathrm{Re}(\lambda - \rho) \in \mathcal{C}^*$, 
then $E^\sharp_\lambda(g)$ converges absolutely for $g\in U_{\mathbb{A}}H_\mathfrak{C}\mathbb{K}$. 
Moreover, for any compact $\mathcal{K}\subset\mathfrak{C}$, the convergence is uniform for $\mathrm{Iw}_H(g)$ in $H_{\mathcal{K}}$.
\end{proposition}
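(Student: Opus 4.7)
Starting from the explicit constant-term formula \eqref{eee}, the strategy is to dominate each Gindikin--Karpelevich factor $c(\lambda,w)$ by $M^{\ell(w)}$ for some single constant $M>0$, absorb the $\rho$-shift in the exponent into a bounded prefactor, and then invoke Lemma~\ref{lem:absuniMh}.

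The first step is to produce a geometric bound on $c(\lambda,w)$. Since the Weyl group preserves the set of positive imaginary roots, $\Phi_w$ consists of exactly $\ell(w)$ positive \emph{real} roots, and every positive real coroot has the form $\alpha^\vee=\sum_{i\in I} n_i\alpha_i^\vee$ with $n_i\in\mathbb{Z}_{\geq 0}$ and $\mathrm{ht}(\alpha^\vee):=\sum_i n_i\geq 1$. Writing $\mathrm{Re}(\lambda-\rho)=\sum_i c_i\Lambda_i$ with $c_i>0$, one gets
\[
\mathrm{Re}\,\langle\lambda,\alpha^\vee\rangle \,=\, \sum_i c_i n_i \,+\, \mathrm{ht}(\alpha^\vee) \,\geq\, 1+\varepsilon, \qquad \varepsilon:=\min_i c_i>0,
\]
uniformly in $\alpha\in\bigcup_{w\in W}\Phi_w$. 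Using the explicit rational form $\zeta_X(s)=P(q^{-s})/((1-q^{-s})(1-q^{1-s}))$ with the zeros of $P$ on $|q^{-s}|=q^{-1/2}$ (Riemann hypothesis for curves), both $\zeta_X(s)$ and $1/\zeta_X(s)$ are holomorphic on $\mathrm{Re}(s)\geq 1+\varepsilon$ and tend to $1$ at infinity, so
\[
M\,:=\,\sup_{\mathrm{Re}(s)\geq 1+\varepsilon}\left|\frac{\zeta_X(s)}{\zeta_X(s+1)}\right|\,<\,\infty,
\]
and hence $|c(\lambda,w)|\leq M^{\ell(w)}$ for every $w\in W$.

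For the second step, write $h=\mathrm{Iw}_H(g)$. Since $\rho\in\mathfrak{h}_{\mathbb{R}}^{\ast}$ is real, $|h^{w\lambda+\rho}|=h^\rho\cdot h^{w\mathrm{Re}(\lambda)}$, hence
\[
|E^\sharp_\lambda(g)|\,\leq\, h^\rho\sum_{w\in W}M^{\ell(w)}\,h^{w\mathrm{Re}(\lambda)}.
\]
The hypothesis $\mathrm{Re}(\lambda-\rho)\in\mathcal{C}^{\ast}$ combined with $\rho\in\mathcal{C}^{\ast}$ forces $\mathrm{Re}(\lambda)\in\mathcal{C}^{\ast}$, so Lemma~\ref{lem:absuniMh} applied with $\mathrm{Re}(\lambda)$ and the above $M$ makes the series on the right converge absolutely and uniformly for $h\in H_{\mathcal{K}}$ for any compact $\mathcal{K}\subset\mathfrak{C}$; the prefactor $h^\rho=\prod_i|t_i|$ is plainly continuous, hence bounded, on $H_{\mathcal{K}}$. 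This yields the asserted convergence on $U_{\mathbb{A}}H_{\mathfrak{C}}\mathbb{K}$.

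The only non-routine input is the uniform bound on $\zeta_X(s)/\zeta_X(s+1)$, and in the function-field setting this is essentially immediate from Weil's work on curves, whereas the analogous estimate for the Riemann zeta is the delicate point in \cite{CGLLM} ultimately responsible for the extra root-system hypotheses imposed there.
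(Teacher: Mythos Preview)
Your proof is correct and follows the same strategy as the paper: bound each zeta-ratio factor, obtain a geometric estimate $|c(\lambda,w)|\le M^{\ell(w)}$, and invoke Lemma~\ref{lem:absuniMh} (you also make explicit the harmless $h^\rho$ prefactor, which the paper leaves implicit). Your bound is in fact slightly cleaner: by exploiting the uniform lower bound $\mathrm{Re}\langle\lambda,\alpha^\vee\rangle\ge 1+\varepsilon$ valid for \emph{every} positive real coroot, you get $|c(\lambda,w)|\le M^{\ell(w)}$ directly, whereas the paper argues that $\langle\lambda,\alpha^\vee\rangle>S$ for all but finitely many positive roots and therefore carries an extra constant independent of $w$.

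One correction to your closing remark: the additional root-system hypotheses in \cite{CGLLM} are \emph{not} imposed in order to bound the constant term---the analogous estimate for $\zeta(s)/\zeta(s+1)$ on $\mathrm{Re}(s)\ge 1+\varepsilon$ is equally elementary over $\mathbb{R}$, and Theorem~3.5 there is proved in the same way. Those hypotheses enter only in the passage from almost-everywhere to everywhere convergence of the full Eisenstein series; the function-field simplification in the present paper lies in Lemma~\ref{lem-pm'}, not here.
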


\begin{proof}
For a smooth projective curve $X$ defined over $\mathbb{F}_q$, one has $\lim_{\mathrm{Re}(s)\to\infty}\frac{\zeta_X(s)}{\zeta_X(s+1)}=1$.
This follows immediately from the well-known fact that $\zeta_X(s)$ can be written as a  product $\prod_{n=1}^\infty \left ( 1 - \frac 1 {q^{ns}} \right )^{-a_n}$, where $a_n$ is the number of closed points of degree $n$.
Alternatively, this follows directly from the well-known Weil conjecture 
\[
\zeta_X(s) = \frac{ \prod^{2g}_{i=1} (1-\beta_i q^{-s})}{(1-q^{-s})(1-q^{1-s})},
\]
where $g$ is the genus of $X$, and $\beta_i$, $i=1,\ldots, 2g$, are algebraic integers with absolute value $\sqrt{q}$.

We may assume that $\lambda \in \mathfrak h_{\mathbb R}^*$. 
Fix a constant $M >1$, and let $S \ge 1$ be a constant such that $\left|\zeta_X(s)/\zeta_X(s+1)\right|\leq M$ for all $\mathrm{Re}(s)>S$. 
Since, for all $i\in I$, we have $\langle\lambda, \alpha_i^\vee\rangle>1$, we deduce that $\langle\lambda,\alpha^\vee\rangle>S$ for all but finitely many positive roots $\alpha$.  
Therefore, for all $w\in W$, the function $c(\lambda,w)$ defined in equation~\eqref{eee} is bounded by $M^{\ell(w)}$ times a constant independent of $w$, since the cardinality of $\Phi_w$ is equal to $\ell(w)$. 
Now the assertion of the proposition follows from Lemma \ref{lem:absuniMh}.
\end{proof}

As a corollary, we obtain almost everywhere convergence of $E_\lambda(g)$ by applying Tonelli's theorem.
\begin{corollary} \label{cor:almost}
If $\lambda\in\mathfrak{h}_{\mathbb{C}}^*$ satisfies $Re(\lambda -\rho) \in \mathcal{C}^*$, then, for any compact $\mathcal{K}\subset\mathfrak{C}$, there exists a measure zero subset $S_0$ of $(\Gamma \cap U_{\mathbb A}) \backslash U_{\mathbb{A}}H_{\mathcal{K}}$ such that $E_\lambda(g)$ converges absolutely for $g\in U_{\mathbb{A}}H_{\mathcal{K}}\mathbb{K}$ off the set $S_0\mathbb{K}$.
\end{corollary}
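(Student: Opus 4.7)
The plan is to deduce almost everywhere convergence of $E_\lambda$ from absolute convergence of its constant term $E_\lambda^\sharp$, via Tonelli's theorem.

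First, I would observe that right-multiplying $g \in G_{\mathbb{A}}$ by $k \in \mathbb{K}$ alters $\mathrm{Iw}_H(\gamma g)$ only by an element of $H_{\mathbb{A}} \cap \mathbb{K}$, whose adelic absolute values are all trivial. Consequently each summand $|\mathrm{Iw}_H(\gamma g)^{\lambda+\rho}|$ is right-$\mathbb{K}$-invariant, so it suffices to exhibit a measure-zero set $S_0 \subset (\Gamma \cap U_{\mathbb{A}}) \backslash U_{\mathbb{A}} H_{\mathcal{K}}$ off which $E_\lambda$ converges absolutely; the corollary then follows by pulling back to $S_0 \mathbb{K}$ in $U_{\mathbb{A}} H_{\mathcal{K}} \mathbb{K}$.

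Second, set $\lambda_0 = \mathrm{Re}(\lambda)$. Since $|h^{\lambda + \rho}| = h^{\lambda_0 + \rho}$ for each $h \in H_{\mathbb{A}}$, all terms of the series defining $E_{\lambda_0}(uh)$ are non-negative reals, and $|E_\lambda(uh)| \le E_{\lambda_0}(uh)$ termwise. For a fixed $h \in H_{\mathcal{K}}$, Tonelli's theorem (equivalently, monotone convergence) permits interchange of summation and integration:
$$\int_{(\Gamma \cap U_{\mathbb{A}}) \backslash U_{\mathbb{A}}} E_{\lambda_0}(uh) \, du = E_{\lambda_0}^\sharp(h),$$
and the right-hand side is finite by Proposition \ref{pro-conv-const}. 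Hence the absolute series for $E_\lambda(uh)$ is finite for almost every $u$ in $(\Gamma \cap U_{\mathbb{A}}) \backslash U_{\mathbb{A}}$.

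Third, I would upgrade this to a product-measure statement. Define
$$S_0 = \Bigl\{(u,h) \in (\Gamma \cap U_{\mathbb{A}}) \backslash U_{\mathbb{A}} \times H_{\mathcal{K}} : \sum_{\gamma} \bigl|\mathrm{Iw}_H(\gamma uh)^{\lambda + \rho}\bigr| = \infty\Bigr\},$$
which is measurable as an infinity-level set of a countable sum of measurable functions. By the previous step, for each $h \in H_{\mathcal{K}}$ the fiber $\{u : (u,h) \in S_0\}$ is $u$-null, so Fubini--Tonelli applied to $\mathbf{1}_{S_0}$ yields that $S_0$ has product measure zero. The subtlety worth flagging is that $H_{\mathcal{K}}$ itself need not have finite Haar measure; however, $L^1$-integrability of $E_\lambda$ over the product is not required---only that every $h$-slice is null---so the argument goes through regardless. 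Pulling back via right-$\mathbb{K}$-invariance produces the desired $S_0 \mathbb{K} \subset U_{\mathbb{A}} H_{\mathcal{K}} \mathbb{K}$ and completes the proof.
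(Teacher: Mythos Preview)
Your proposal is correct and follows the same approach the paper indicates: the paper's proof consists solely of the sentence ``we obtain almost everywhere convergence of $E_\lambda(g)$ by applying Tonelli's theorem,'' and you have spelled out precisely that argument (right-$\mathbb{K}$-invariance, reduction to the real part $\lambda_0$, Tonelli on the $u$-integral against the finite constant term from Proposition~\ref{pro-conv-const}, and Fubini on the indicator of the bad set to pass to the product measure). Nothing is missing and no alternative route is taken.
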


\section{Everywhere convergence} \label{Conv}

In this section, we establish everywhere convergence of $E_\lambda$.

\medskip

We start with a lemma which is crucial for the proof of the main theorem.   

\begin{lemma} \label{lem-pm'}
If $h \in H_{\bar {\mathfrak C}}$ and $U_\mathbb{K}:= \mathbb \mathbb{U}_\mathbb{A}\cap \mathbb{K}$, then the image of $hU_\mathbb{K}h^{-1}$ in  $\Gamma\cap U_{\mathbb{A}}\backslash U_{\mathbb{A}}$ has positive measure.
\end{lemma}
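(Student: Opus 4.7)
My plan is to compute the image of $hU_\mathbb{K}h^{-1}$ in $\Gamma\cap U_\mathbb{A}\backslash U_\mathbb{A}$ factor-by-factor along the natural filtration of $U_\mathbb{A}$ by positive real root subgroups, with a quantitative estimate on each factor coming from Riemann--Roch on the curve $X$. Write $h = \prod_{i \in I} h_i(t_i)$ and set $x = \sum_{i \in I} \log|t_i|\,\alpha_i^{\vee} \in \bar{\mathfrak{C}}$. Since $H_\mathbb{A}$ normalizes $U_\mathbb{A}$, conjugation by $h$ is an automorphism of $U_\mathbb{A}$; for each positive real root $\alpha$, it sends the root subgroup $U_{\alpha,\mathbb{A}} \cong \mathbb{A}$ (via $u_\alpha$) to itself by multiplication by the idele $\chi_\alpha(h) := \prod_{i \in I} t_i^{\langle \alpha, \alpha_i^{\vee}\rangle}$, of adelic norm $\log|\chi_\alpha(h)| = \langle \alpha, x\rangle$. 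Thus, viewed through the root groups, $hU_\mathbb{K}h^{-1}$ is represented componentwise by the additive subsets $\chi_\alpha(h)\widehat{\mathcal{O}} \subset \mathbb{A}$, where $\widehat{\mathcal{O}} := \prod_v \mathcal{O}_v$.

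Using the coherent basis $\mathfrak{X}$ of Section~\ref{Zform} to order the positive real roots by non-decreasing height, I would set up a descending filtration $U_\mathbb{A} \supset U_\mathbb{A}^{(1)} \supset U_\mathbb{A}^{(2)} \supset \cdots$ whose graded pieces are the $U_{\alpha,\mathbb{A}}$ for positive real $\alpha$. The projective-limit probability measure $du$ is then obtained as the limit of quotient Haar measures on the finite-dimensional $\Gamma_k \backslash U_\mathbb{A}/U_\mathbb{A}^{(k)}$, and the measure of the image of $hU_\mathbb{K}h^{-1}$ at level $k$ factors as $\prod_{\mathrm{ht}(\alpha)\leq k} \mu_\alpha(h)$, where $\mu_\alpha(h)$ is the probability measure of the image of $\chi_\alpha(h)\widehat{\mathcal{O}}$ in $F\backslash \mathbb{A}$. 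Each $\mu_\alpha(h)$ is strictly positive, and by Riemann--Roch on $X$, $\mu_\alpha(h) = 1$ as soon as $\deg \chi_\alpha(h) = \langle \alpha, x\rangle/\log q > 2g - 2$, where $g$ is the genus of $X$.

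The Tits cone hypothesis $x \in \bar{\mathfrak{C}}$ is then invoked to show that $\mu_\alpha(h) = 1$ for all but finitely many $\alpha \in \Phi_+^{\mathrm{re}}$. Picking a representative $w \in W$ with $w^{-1}x$ in the closure of the fundamental chamber $\bar{\mathcal{C}}$, one computes $\langle \alpha, x\rangle = \langle w^{-1}\alpha, w^{-1}x\rangle$; writing a positive real root $\alpha$ as $w\beta$ with $\beta \in \Phi_+$ (possible except for the $\ell(w)$ roots in $\Phi_w$), the pairing $\langle \beta, w^{-1}x\rangle$ is non-negative and goes to infinity with $\mathrm{ht}(\beta)$. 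Hence the set of positive real roots with $\langle \alpha, x\rangle \leq (2g-2)\log q$ is finite, the infinite product reduces to a finite product of strictly positive numbers, and the result is positive.

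I expect the main obstacle to be justifying the factorization of $du$ over the height filtration as a product of probability measures on $F\backslash\mathbb{A}$ indexed by positive real roots; this relies on the coherently ordered basis $\mathfrak{X}$ from \cite{CG} interacting compatibly with the adelic structure on each graded piece, and on showing that imaginary root directions contribute factors equal to $1$ (or do not enter the additive filtration at all). A secondary issue is extending the Tits cone finiteness argument to boundary points of $\bar{\mathfrak{C}}$ that do not lie in any single $w\bar{\mathcal{C}}$; this can be addressed by perturbing $x$ into the open Tits cone $\mathfrak{C}$ and invoking lower semicontinuity of the projective-limit measure before passing to the limit.
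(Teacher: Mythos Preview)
Your proposal has a real gap, and it is precisely the one you flag as ``the main obstacle'': there is no filtration of $U_{\mathbb{A}}$ whose successive quotients are the real root subgroups $U_{\alpha,\mathbb{A}}$. In a general (non-affine) symmetrizable Kac--Moody group the pro-unipotent $U$ is constructed here as a completion inside $\Aut(V_F)$, and imaginary root directions genuinely occur; those root spaces can have arbitrary multiplicity and carry no one-parameter subgroup $u_\alpha(s)$, so neither ``multiplication by the idele $\chi_\alpha(h)$'' nor the identification of the graded quotient with $F\backslash\mathbb{A}$ is available there. Consequently the product $\prod_{\alpha}\mu_\alpha(h)$ you aim for is not defined as written, and the parenthetical hope that imaginary directions ``contribute factors equal to $1$ (or do not enter the additive filtration at all)'' is exactly what would need to be proved. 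Without that factorization the Riemann--Roch step, however natural, has nothing to act on.

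The paper sidesteps this entirely with a different decomposition. Writing $h=w^{-1}h_1w$ with $h_1\in H_{\bar{\mathcal C}}$, one splits $U_{\mathbb A}=U^{w}U_{w}$ where $U_{w}\cong\mathbb{A}^{\ell(w)}$ is finite-dimensional and $U^{w}$ is the (infinite-dimensional) remainder. The key step is a containment rather than a computation: one checks $h^{-1}U^{w}_{\mathbb K}h\subset U^{w}_{\mathbb K}$ directly on matrix coefficients in $V$, using only that $h_1^{\,w\mu-\nu}\le 1$ whenever $\nu\succeq w\mu$, a soft inequality valid on the closed chamber that needs no root-by-root bookkeeping and no real/imaginary distinction. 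Hence $U^{w}_{\mathbb K}\cdot hU_{w,\mathbb K}h^{-1}\subset hU_{\mathbb K}h^{-1}$; since $U_{w}$ is locally compact, $U_{w,\mathbb K}$ and $hU_{w,\mathbb K}h^{-1}$ are commensurable open compact subgroups, and positivity follows from that of $U_{\mathbb K}$. In short, the paper pushes the finitely many ``bad'' roots of $\Phi_w$ into a finite-dimensional factor handled by commensurability, and disposes of the infinite-dimensional part with a single inclusion, whereas your route attempts to control infinitely many factors one at a time and runs aground on the imaginary roots.
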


\begin{proof}
Since a fundamental domain of $\mathbb A/F$ can be chosen inside $\prod_{v \in \mathcal V} \mathcal O_v$, we see that $U_\mathbb{K}$  has positive measure in $\Gamma\cap U_{\mathbb{A}}\backslash U_{\mathbb{A}}$. 
By assumption, we have that 
\[
h= w^{-1} h_1 w
\]
for some $w \in W$ and $h_1 \in H_{\bar{\mathcal C}}$.  
Write $U^{w}=U_{\mathbb{A}}\cap w^{-1}U_{\mathbb{A}}w$ and $U_{w}=U_{\mathbb{A}}\cap w^{-1}U^-_{\mathbb{A}}w$, where $U^-_{\mathbb A}$ is the opposite unipotent group. 
Then we may decompose the space $U_{\mathbb{A}}$ as a product:
\begin{equation}\label{eq.decomposition'}
U_{\mathbb{A}}=U^{w}U_{w}.
\end{equation}
Note that $U_{w}\cong\mathbb{A}^{\ell(w)}$. 
Likewise, 
\[
U_\mathbb{K} = U^{w}_\mathbb{K} U_{w, \mathbb{K}},
\]
where $U^{w}_\mathbb{K}  := U^{w} \cap \mathbb{K}$ and $U_{w, \mathbb{K}}: = U_{w}\cap \mathbb{K}$. 

Recall that we fixed  a regular dominant integral weight $\Lambda \in \mathfrak h_{\mathbb C}^*$ and that $(V,\pi)$ is the irreducible highest weight representation of $\mathfrak g_{\mathbb C}$ with highest weight $\Lambda$. 
The group $G_v$ is defined as a subgroup of $\mathrm{Aut}(V_{F_v})$ for each $v\in\mathcal{V}$, and the adelic group $G_{\mathbb{A}}$ is a restricted product of $G_v$.
For each $v\in\mathcal{V}$, let $\mathfrak X_v$ denote the coherently ordered basis for $V_{F_v}$ induced from $\mathfrak X$ for $V_{\mathbb Z}$, and consider $x_{v,\mu} \in \mathfrak X_v$ with weight $\mu$. 
Let $U^{w}_v$ be the local component of $U^{w}$ at the place $v$.
For $u_v \in U^{w}_v$, we have that $w u_v w^{-1}\in U_v$, thus we can write 
\[
wu_v x_{v, \mu} =  (w u_v w^{-1}) wx_{v,\mu} = \sum_{ \nu \succeq w\mu} \tilde x_{v,\nu}, \qquad \text{ or } \qquad u_v x_{v, \mu} = \sum_{ \nu \succeq w\mu} w^{-1} \tilde x_{v,\nu}, 
\]
where $\tilde x_{v,\nu}$ are of weight $\nu$ and $\succeq$ is the usual partial order, i.e., $\nu \succeq w\mu$ if and only if $\nu -w\mu$ is a non-negative integral sum of simple roots $\alpha_i$ ($i \in I$).
For an integral weight $\lambda$ of $\mathfrak g_{\mathbb C}$ and $h_v = \prod_{i \in I} h_i(t_{i,v}) \in H_v$, $t_{i,v} \in F_v^\times$, define  
\[ 
h_v^\lambda  = \prod_{i \in I} t_{i,v}^{\langle \lambda , \alpha_i^\vee \rangle}  \in F_v^\times. 
\]
Write $h=(h_v)_{v\in\mathcal{V}} = (w^{-1} h_{1,v} w)_{v\in \mathcal{V}} \in H_{\overline {\mathfrak C}}$. 
Then we have 
\begin{align*}
 h_v^{-1} u_v h_v x_{v,\mu} & =  h_v^{\mu} \cdot h_v^{-1} u_v x_{v,\mu}  = h_v^{\mu}\cdot w^{-1}h_{1,v}^{-1} w u_v x_{v,\mu} \\
 & = h_v^{\mu}\cdot  w^{-1}h_{1,v}^{-1} \sum_{\nu\succeq w\mu } \tilde x_{v, \nu}   =  \sum_{\nu \succeq w\mu}h_v^{\mu-w^{-1}\nu} \cdot w^{-1} \tilde x_{v,\nu}  \\
 & = \sum_{\nu\succeq w\mu} h_{1,v}^{w\mu-\nu}\cdot w^{-1} \tilde x_{v,\nu}
\end{align*}  at each place $v \in \mathcal V$.
 Considering $u_v$ as an infinite matrix, we see that the each entry in the block corresponding to $(\mu, \nu)$ has been multiplied by $h_{1,v}^{w\mu-\nu}$ after conjugation by $h_v^{-1}$. 
Globally, it follows from \eqref{eqhl} that $h_1^{w\mu-\nu}\leq 1$. 
In particular, taking $u=(u_v)_{v\in\mathcal{V}}\in U^w_\mathbb{K}$,  we see that 
\[
h^{-1} U^w_\mathbb{K} h \subset U^w_\mathbb{K}.
\]
It follows that $U^w_\mathbb{K} \subset hU^w_\mathbb{K} h^{-1}$ hence
\[
U^w_\mathbb{K} \cdot h U_{w,\mathbb{K}} h^{-1} \subset hU_\mathbb{K} h^{-1}.
\]
Since $U_w\cong \mathbb{A}^{\ell(w)}$ is locally compact, its open compact subgroups $hU_{w,\mathbb{K}} h^{-1}$ and $U_{w,\mathbb{K}}$ are commensurable. 
As $U_\mathbb{K} = U^w_\mathbb{K} U_{w, \mathbb{K}}$ has positive measure in $\Gamma\cap U_\mathbb{A}$,  so do $U^w_\mathbb{K} \cdot h U_{w,\mathbb{K}} h^{-1} $ 
and $hU_\mathbb{K} h^{-1}$.
\end{proof}

Now we prove the main theorem.
\begin{theorem}[Theorem \ref{conv.thm.intro}]\label{convergence.theorem}
If $\lambda\in\mathfrak{h}^{\ast}_{\mathbb{C}}$ satisfies $Re(\lambda - \rho) \in \mathcal{C}^*$, then the series $E_\lambda(g)$ converges absolutely for $g\in U_{\mathbb{A}}H_\mathfrak{C}\mathbb{K}$.
\end{theorem}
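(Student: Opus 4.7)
The plan is to combine the absolute convergence of the constant term (Proposition~\ref{pro-conv-const}) with Lemma~\ref{lem-pm'} via a right-invariance trick. Since $|\mathrm{Iw}_H(\gamma g)^{\lambda+\rho}|=\mathrm{Iw}_H(\gamma g)^{\mathrm{Re}(\lambda)+\rho}$, absolute convergence of $E_\lambda$ at $g$ is the same as convergence of $E_{\mathrm{Re}(\lambda)}(g)$; hence we may assume $\lambda\in\mathfrak h_{\mathbb R}^*$ and work with a nonnegative series. The right-$\mathbb{K}$-invariance of $\mathrm{Iw}_H$ further reduces the problem to proving $E_\lambda(u_0 h_0)<\infty$ for arbitrary $u_0\in U_{\mathbb A}$ and $h_0\in H_{\mathfrak C}$.

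Introduce the nonnegative function $f\colon U_{\mathbb A}\to [0,\infty]$ by $f(u)=E_\lambda(uh_0)$; it is $(\Gamma\cap U_{\mathbb A})$-left invariant. The central observation is that $f$ is \emph{right}-invariant under the compact subgroup $K_0:=h_0 U_{\mathbb{K}}h_0^{-1}\subset U_{\mathbb A}$: for $v=h_0 k h_0^{-1}\in K_0$ with $k\in U_{\mathbb{K}}\subset\mathbb{K}$,
\[
f(uv)=E_\lambda(uh_0kh_0^{-1}\cdot h_0)=E_\lambda(uh_0 k)=E_\lambda(uh_0)=f(u),
\]
the penultimate equality being the right-$\mathbb{K}$-invariance of $E_\lambda$.

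By Proposition~\ref{pro-conv-const}, $\int_{(\Gamma\cap U_{\mathbb A})\backslash U_{\mathbb A}} f\,d\mu = E^\sharp_\lambda(h_0)<\infty$, so $\{f=\infty\}$ has $\mu$-measure zero in the quotient. Because $F$ is countable, $\Gamma\cap U_{\mathbb A}$ is countable, hence the full preimage of $\{f=\infty\}$ in $U_{\mathbb A}$ is a countable union of translates of a null set in a fundamental domain, and so has Haar measure zero. On the other hand, Lemma~\ref{lem-pm'} (applied with $h_0\in H_{\mathfrak C}\subset H_{\bar{\mathfrak C}}$) implies that $K_0$ has positive $\mu$-image, which for a compact set forces $K_0$ to have positive Haar measure in $U_{\mathbb A}$. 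By left-translation invariance of Haar, $u_0 K_0$ also has positive Haar measure, so $u_0 K_0\not\subset\{f=\infty\}$. Any $v\in K_0$ with $f(u_0 v)<\infty$ then yields, via the right-$K_0$-invariance, $f(u_0)=f(u_0 v)<\infty$, which is the conclusion.

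The main obstacle is aligning the right-invariance of $f$ with a subgroup whose image has positive measure: the conjugate $h_0 U_{\mathbb{K}}h_0^{-1}$ is exactly the one that both works for right-invariance (conjugation by $h_0^{-1}$ passes $v$ into $U_{\mathbb{K}}\subset\mathbb{K}$, where the $\mathbb{K}$-invariance of $E_\lambda$ takes over) and admits the positive-measure guarantee of Lemma~\ref{lem-pm'}. Once those two ingredients are in place, the remainder of the argument is a routine measure-theoretic dichotomy between positive Haar measure and almost-everywhere finiteness.
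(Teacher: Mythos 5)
Your proposal is, in substance, the paper's own proof: the three ingredients are identical (right-$\mathbb{K}$-invariance of $\mathrm{Iw}_H$ after conjugating $h_0U_{\mathbb{K}}h_0^{-1}$ back into $\mathbb{K}$, Lemma~\ref{lem-pm'} for positive measure of that image, and Proposition~\ref{pro-conv-const}/Corollary~\ref{cor:almost} for almost-everywhere finiteness), with the only cosmetic difference that you argue directly while the paper argues by contradiction from $E_\lambda(uh)=\infty$.

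One step of your bookkeeping, however, is not justified as written. You transfer the null set $\{f=\infty\}$ and the set $u_0K_0$ into $U_{\mathbb{A}}$ and compare them using ``Haar measure'' on $U_{\mathbb{A}}$ together with countability of $\Gamma\cap U_{\mathbb{A}}$. But $U_{\mathbb{A}}$ is an infinite-dimensional pro-unipotent group (a restricted product of projective limits of finite-dimensional unipotent groups), hence not locally compact, so there is no Haar measure on it in the classical sense; likewise the countability of $\Gamma\cap U_{\mathbb{A}}$ (a projective-limit-type group) is not clear and is not needed. The correct way to make your dichotomy work --- and what the paper does --- is to stay in the quotient $\Gamma\cap U_{\mathbb{A}}\backslash U_{\mathbb{A}}$ with its projective-limit probability measure $du$, and to note that since the finite-dimensional unipotent quotients are unimodular, the image of the left translate $u_0K_0$ has the same (positive) measure as the image of $K_0$ furnished by Lemma~\ref{lem-pm'}; this positive-measure set cannot lie inside the null set where $E_\lambda(\cdot\,h_0)=\infty$, and your right-$K_0$-invariance computation then finishes the argument exactly as in the paper.
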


\begin{proof}
We may consider $E_\lambda$ as a function on $\left(\Gamma \cap U_{\mathbb A} \backslash U_{\mathbb A}\right) \times H_{\mathbb A}$.
Assume that $E_\lambda (u h) = \infty$ for some $u\in U_{\mathbb A}$ and $h \in H_{\mathfrak{C}}$. 
By Lemma \ref{lem-pm'}, $U':= hU_\mathbb{K}h^{-1}$ has positive measure in $\Gamma\cap U_\mathbb{A}\backslash U_\mathbb{A}$. 
Then we have 
\[
 E_\lambda(u u'  h) = E_\lambda(u h (h^{-1}u' h)) = E_\lambda (u h) = \infty
\]
for any $u' \in U'$.  
Since finite dimensional unipotent groups are unimodular and $\Gamma\cap U_\mathbb{A}\backslash U_\mathbb{A}$ admits the projective limit measure, $uU'$ and $U'$ have the same measure in  $\Gamma\cap U_\mathbb{A}\backslash U_\mathbb{A}$, which is positive.
This is a contradiction to Corollary~\ref{cor:almost} and completes the proof.
\end{proof}

\end{document}